\def\titlerunning#1{\gdef\titrun{#1}}
\def\author#1{\gdef\autrun{\def\and{\unskip, }#1}\gdef\@author{#1}}
\def\address#1{{\def\and{\\\hspace*{18pt}}\renewcommand{\thefootnote}{}%
\footnote {#1}}%
\markboth{\autrun}{\titrun}}
\def\email#1{e-mail: #1}
\def\subjclass#1{{\renewcommand{\thefootnote}{}%
\footnote{\emph{Mathematics Subject Classification (2020):} #1}}}
\def\keywords#1{\par\medskip
\noindent\textbf{Keywords.} #1}
\newtheorem{result}{\textbf{Theorem}}
\newtheorem{lemma}{Lemma}[section]
\newtheorem{definition}{Definition}[section]
\newtheorem{remark}{Remark}[section]
\newtheorem{example}{Example}[section]
\numberwithin{equation}{section}
\begin{document}
\baselineskip=15pt

\titlerunning{Time periodic solutions of first order mean field games}
\title{Time periodic solutions of first order mean field games from the perspective of Mather theory}
\author{Panrui Ni}

\maketitle
\address{Shanghai Center for Mathematical Sciences, Fudan University, Shanghai 200433, China; \\
\email{panruini@fudan.edu.cn}}

\subjclass{35Q89; 37J51; 35B10}

\vspace{-6ex}
\begin{abstract}
In this paper, the existence of non-trivial time periodic solutions of first order mean field games is proved. It is assumed that there is a non-trivial periodic orbit contained in the Mather set. The whole system is autonomous with a monotonic coupling term. Moreover, the large time convergence of solutions of first order mean field games to time periodic solutions is also considered.

\keywords{mean field games; minimal measures; time periodic solutions}
\end{abstract}




\section{Introduction}

The mean field games (MFGs) was introduced by Lasry and Lions in \cite{Li1,Li2,Li3} and by Caines, Huang and Malham\'e in \cite{Hu1,Hu2} to analyze large population stochastic differential games. In this paper, we consider the evolutionary first order mean field games with a finite horizon $T>0$ and an initial-final condition, which is a system consists of a Hamilton-Jacobi equation coupled with a continuity equation (cf. \cite[Eqs. (49)-(51)]{Li3})
\begin{numcases}{}
  \partial_t u(x,t)+H(x,Du(x,t))=F(m(t))+c,\quad (x,t)\in M\times (0,T).\label{ME1}\\
  \partial_t m(x,t)+\textrm{div}(\partial_p H(x,D u(x,t))m(x,t))=0,\quad (x,t)\in M\times (0,T).\label{ME2}\\
  u(x,0)=\varphi(x),\quad m(x,T)=m_T(x).\label{ME3}
\end{numcases}
Here we assume $M$ is a compact connected and smooth Riemannian manifold without boundary. Let $D$ stand for the spacial gradient with respect to $x\in M$, and let $d(x,y)$ be the Riemannian distance between $x$ and $y$ in $M$. We denote by $TM$ and $T^*M$ the tangent and cotangent bundle over $M$ respectively. The function $H:T^*M\times\mathbb R\to\mathbb R$ is called the Hamiltonian. Let $\mathcal P(M)$ be the space of all Borel probability measures on $M$. The function $m(x,t)$ is the density of the mean field term $m(t)\in\mathcal P(M)$ if it is absolutely continuous with respect to the Lebesgue measure, i.e. $dm(t)=m(x,t)dx$. In previous works, the coupling term $F$ is usually assumed to depend on the spacial variable $x\in M$, i.e., $F=F(x,m)$ is from $M\times \mathcal P(M)$ to $\mathbb R$. In the present paper, we assume the coupling term $F:\mathcal P(M)\to \mathbb R$ is a functional independent of $x\in M$, and $c\in\mathbb R$ is a constant. Although $F$ does not depend on $x$, the results of this article can still handle the case where the coupling term is of the form $F(m)+f(x)$, because the Hamiltonian $H$ depends on $x$.

The solution of the above system is defined as follows.
\begin{definition}(Viscosity solution)
A continuous function $u:M\times[0,T]\rightarrow\mathbb R$ is called a viscosity subsolution (resp. supersolution) of (\ref{ME1}) if for each test function $\phi:M\times[0,T]\to\mathbb R$ of class $C^1$, when $u-\phi$ attains its local maximum (resp. minimum) at $(x,t)$, then
\begin{equation*}
  \partial_t \phi(x,t)+H(x,D\phi(x,t))\leq F(m(t))+c,\quad (\textrm{resp}.\ \geq F(m(t))+c).
\end{equation*}
A continuous function $u$ is called a viscosity solution of (\ref{ME1}) if it is both a viscosity subsolution and a viscosity supersolution.
\end{definition}
\begin{definition}\label{ws}
A weak solution of the system (\ref{ME1})-(\ref{ME2}) is a couple $(u(x,t),m(t))\in C(M\times[0,T])\times L^1([0,T],\mathcal P(M))$ such that (\ref{ME1}) is satisfied in viscosity sense and (\ref{ME2}) is satisfied in distributions sense, i.e.,
\[\int_{M}\varphi(x,T)dm_T(x)=\int_0^T\int_{M}(\partial_t\varphi(x,t)+\langle \partial_p H(x,D u(x,t)),D\varphi(x,t)\rangle) dm(t)(x),\]
for each $\varphi\in C^\infty_c(M\times(0,T])$. Here $\langle\cdot,\cdot\rangle$ represents the canonical pairing between the tangent space and cotangent space, 
$C^\infty_c(M\times(0,T])$ stands for the set of all smooth functions on $M\times(0,T]$ with compact support.
\end{definition}

Now we recall some facts on the space of probability measures $\mathcal P(M)$. A sequence $\{\mu_k\}_{k\in\mathbb N}\subset\mathcal P(M)$ is $w^*$-convergent to $\mu\in\mathcal P(M)$ if
\[\lim_{k\rightarrow+\infty}\int_M f(x)d\mu_k=\int_M f(x)d\mu,\quad \forall f\in C(M).\]
We shall work with the Monge-Wasserstein distance $d_1$, which is defined by
\[d_1(m_1,m_2)=\sup_\phi \int_{M} \phi d(m_1-m_2),\quad \forall m_1,\ m_2\in\mathcal P(M),\]
where the supremum is taken over all 1-Lipschitz continuous functions on $M$. We recall that $d_1$ metricizes the $w^*$-topology. Assume the coupling term $F:\mathcal P(M)\to\mathbb R$ satisfies
\begin{itemize}
	\item[{\bf (F)}] {\it Lipschitz continuity:} there is $C>0$ such that $|F(m_1)-F(m_2)|\leq Cd_1(m_1,m_2)$.
\end{itemize}

Consider the stationary system corresponding to (\ref{ME1})-(\ref{ME3})
\begin{numcases}{}
  H(x,Du(x))=F(m)+c,\quad x\in M.\label{ME01}\\
  \textrm{div}(\partial_p H(x,Du(x))m(x))=0,\quad x \in M.\label{ME02}
\end{numcases}
A natural goal of the study of MFGs is to prove the convergence of solutions of evolutionary MFGs (\ref{ME1})-(\ref{ME3}) to equilibrium states (\ref{ME01})-(\ref{ME02}) as the horizon $T\to+\infty$. 
There are many works on the long time average of solutions of evolutionary MFGs. 
Usually, we assume $F:M\times\mathcal P(M)\to\mathbb R$ satisfies the monotone condition
\begin{equation}\label{monoc}
  \int_M (F(x,m)-F(x,m'))d(m-m')\geq 0, \quad \forall m,\ m'\in\mathcal P(M).
\end{equation}
Without the monotone condition, the solution of evolutionary MFGs may not converges to equilibrium states, see \cite{tp00,tp1,tp3,tp2}. In this paper, we provide a case in which there are non-trivial time periodic solutions of first order MFGs. Since $F:\mathbb P(M)\to\mathbb R$ is independent of $x\in M$, for all probability measures $m$ and $m'$, we have \[\int_M (F(m)-F(m'))d(m-m')=(F(m)-F(m'))\int_Md(m-m')=0.\] Then the coupling term $F:\mathcal P(M)\to\mathbb R$ considered in the present paper satisfies the monotone condition (\ref{monoc}). The results in this paper show that the large time behavior of MFGs is related to the minimal measures introduced by J. Mather in \cite{Mat}. For the further study, we may consider the coupling term $F$ depending on the spacial variable $x\in M$.

Before presenting the results obtained in this paper, we first recall some recent works on the large time behavior of mean field games. For the second order mean field games, the large time behavior is quite well understood. One can refer to \cite{Car2,Mas,Car3,Car30,Cir}. For the first order mean field games, the analysis is related to the weak KAM theory and the Aubry-Mather theory. There are many works on the long time average, see \cite{Car1,Can2,Can1,Car4,Carf}. In these works, the monotone condition (\ref{monoc}) plays a center role. In \cite{Bar}, the author proved the uniform convergence of $u$ and the convergence of $m$ in the distance $d_1$ to stationary solutions when the Hamiltonian of the form $|p|^2/2$. For mean field games with non-monotone coupling term, there are also some works on the existence of time periodic solutions. For congestion problem, \cite{tp2} discussed the traveling wave solution, which is the first example of time periodic solutions. For second order mean field games with the Hamiltonian of the form $|p|^2/2$, \cite{tp1} proved the existence of time periodic solutions, and \cite{tp3} discussed the existence of oscillating solutions when there are two populations of agents. In \cite{tp00}, by discussing the minimal points of an energy functional, the authors found time periodic solutions of first order mean field games on $\mathbb R^n$.

In \cite{Mat}, in order to generalize the Aubry-Mather theory for twist maps, J. Mather introduced the concept of minimal measures, which are also called the Mather measures. Assume the Hamiltonian $H:T^*M\to\mathbb R$ is $C^2$, and satisfies the Tonelli condition
\begin{itemize}
\item[{\bf (H1)}] {\it Strictly convexity:} the Hessian matrix $\partial^2_{pp}H(x,p)$ is positively definite for all $(x,p)\in T^*M$.
\item[{\bf (H2)}] {\it Superlinearity:} $H(x,p)/|p|\to+\infty$ as $|p|\to+\infty$ for all $x\in M$.
\end{itemize}
Define the Legendre dual of $H$, which is called the Lagrangian as follows
\begin{equation*}
	L(x,\dot{x}):=\sup_{p\in T^*_xM}\{\langle\dot{x},p\rangle-H(x,p)\}.
\end{equation*}
Then $L:TM\to\mathbb R$ is also of class $C^2$, and satisfies the Tonelli condition with respect to $\dot x$ (cf. \cite[Appendix]{CS}). For each $x,y\in M$, consider the action minimizing problem
\begin{equation}\label{ht}
  h_t(x,y):=\min \int_0^t L(\gamma(s),\dot \gamma(s))ds,
\end{equation}
where the minimum is taken among absolutely continuous curves $\gamma:[0,t]\rightarrow M$ with $\gamma(0)=x$ and $\gamma(t)=y$. Here $h_t$ is called the minimal action. By the Tonelli Theorem, the minimum can be achieved, see \cite{Fat-b}. The curves achieving the minimum satisfy the Euler-Lagrange equation
\[\frac{d}{dt}\bigg(\frac{\partial L}{\partial \dot x}(x,\dot x)\bigg)=\frac{\partial L}{\partial x}(x,\dot x).\]
Denote by $\Phi^L_t$ the flow of the Euler-Lagrange equation. Denote by $\Phi^H_t$ the dual flow of $\Phi^L_t$ through the Legendre transformation, which is the flow of the Hamilton equation
\begin{equation}\label{Heq}
  \dot x=\frac{\partial H}{\partial p}(x,p),\quad \dot p=-\frac{\partial H}{\partial x}(x,p).
\end{equation}
In the following, we denote by $m$ the measures on $M$ and $\mu$ the measures on $TM$. Consider the action minimizing problem
\[-c_0=\min_{\mu} \int_{TM}L(x,\dot x)d\mu,\]
where $\mu$ varies among all Borel probability measures on $TM$ invariant under the action of the flow $\Phi^L_t$. The constant $c_0$ is the unique constant $c\in\mathbb R$ such that $H(x,Du)=c$ has solutions, and is called the critical value or the effective Hamiltonian, cf. \cite[Theorem 1]{hom}. We call the measures achieve this minimum the minimal measures, and denote by $\tilde{\mathfrak M}$ the set of all minimal measures. Define the set of all projected minimal measures as
\[\mathfrak M:=\{m\in\mathcal P(M):\ m=\pi_\# \mu,\ \mu\in\tilde{\mathfrak M}\},\]
where $\pi:TM\to M$ is the canonical projection, and $\pi_\#$ is the push-forward induced by $\pi$. The Mather set is defined as
\[\tilde{\mathcal M}=\overline{\bigcup_{\mu}\ \textrm{supp}(\mu)}\subset TM,\]
where $\textrm{supp}(\mu)$ is the support of $\mu$, $\bar A$ is the closure of $A\subset TM$, and the union is taken over all minimal measures $\mu\in\tilde{\mathfrak M}$. Let $\mathcal M=\pi(\tilde{\mathcal M})$ be the projected Mather set. By \cite{Mat}, $\tilde{\mathcal M}$ is a Lipschitz graph over $\mathcal M$.

Now we show that Mather's minimal measures give solutions of (\ref{ME01})-(\ref{ME02}). Let $\mu\in\tilde{\mathfrak M}$. Since $\mu$ is $\Phi^L_t$-invariant, by \cite{Mat}, we have
\[\int_{TM}\langle \dot x,d\varphi(x)\rangle d\mu=0,\]
By \cite{Fat-b}, let $u_0$ be a solution of
\begin{equation}\label{H=c0}
  H(x,Du)=c_0.
\end{equation}
Then $u_0$ is differentiable on $\mathcal M$, cf. \cite[Theorem 5.2.2]{Fat-b}. The derivative of $u_0$ is given by
\begin{equation}\label{Du-}
  Du_0(x)=\frac{\partial L}{\partial \dot x}(x,\dot x),\quad (x,\dot x)\in\tilde{\mathcal M},
\end{equation}
which is independent of $u_0$, and is Lipschitz continuous. Let $m_0=\pi_{\#}\mu\in\mathfrak M$ be a projected minimal measure. Then
\[\int_M \bigg\langle \frac{\partial H}{\partial p}(x,Du_0(x)),d\varphi(x)\bigg\rangle dm_0=\int_{\tilde{\mathcal M}}\langle \dot x,d\varphi(x)\rangle d\mu=0,\]
which implies that $(u_0,m_0)$ is a solution of (\ref{ME01})-(\ref{ME02}) with $c=c_0-F(m_0)$. Assume
\begin{itemize}
\item[{\bf (H3)}] there is a non-trivial periodic orbit $\Gamma(t)=(\gamma(t),\dot\gamma(t))$ on $\tilde{\mathcal M}$ with period $\tau$.
\end{itemize}
Here we recall that, \cite{Man} proved that the minimal measure is unique for generic Lagrangian. It was conjectured by R. Ma\~n\'e that, the unique minimal measure is supported on a fixed point or a periodic orbit.

\begin{remark}
For the one dimensional case $M=\mathbb T\simeq \mathbb R/\mathbb Z$, we can give more description on (H3) from the perspective of the Mather's theory. Let $\alpha:\mathbb R\to\mathbb R$ be the Mather's $\alpha$-function on the cohomology (see \cite[Section 4.10]{Fat-b}). Then for each $a\in\mathbb R$, $\alpha(a)$ is the unique constant $c$ such that $H(x,a+Du)=c$ is solvable. By Lemma \ref{uC2} below, the Mather set consists of either fixed points or exactly one periodic orbit. Assume that there is a fixed point in the Mather set $\tilde{\mathcal M}$. Then the rotation number of all minimal measures is equal to zero, Equivalently, $\alpha'(a)=0$, one can see \cite{Mat} for more details. If (H3) holds, then $\alpha'(a)\neq 0$. Consider
\[H_a(x,p)=\frac{1}{2}(p+a)^2+V(x),\]
where $V:\mathbb T\to\mathbb R$ that attains its maximum at exactly two points $0$ and $X$ and such that $V(0)=V(X)=0$. Let $a_0:=\int_0^1\sqrt{-2V(x)}dx$. Then $\alpha(a)=0$ for $a\in[-a_0,a_0]$ and $\alpha(a)>0$ for $a\notin [-a_0,a_0]$ (see \cite[Chapter 4]{Za2} for details). Since $\alpha:\mathbb R\to\mathbb R$ is convex (see \cite{Mat}), $\alpha'(a)\neq 0$ holds for $a\notin [-a_0,a_0]$, which implies that (H3) holds.
\end{remark}

Define
\[\mathfrak N:=\{m\notin\mathfrak M,\ \textrm{supp}(m)\subset\{\gamma\}\}\subset\mathcal P(M).\]

\begin{result}\label{m1}
Assume (H1)-(H3)(F) and $m_T\in\mathfrak N$, then there is $c(m_T)\in\mathbb R$ such that the system (\ref{ME1})-(\ref{ME3}) has a non-trivial time periodic solution with period $\tau$ when $c=c(m_T)$.
\end{result}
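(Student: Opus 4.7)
The plan is to build the periodic solution explicitly, using the periodic orbit $\Gamma=(\gamma,\dot\gamma)\subset\tilde{\mathcal M}$ as a skeleton that carries the measure $m(t)$. The key structural fact is that by (\ref{Du-}) and Legendre duality, every weak KAM solution $u_0$ of $H(x,Du_0)=c_0$ satisfies $\partial_p H(\gamma(s),Du_0(\gamma(s)))=\dot\gamma(s)$ on $\{\gamma\}$; thus on the support of the measure I intend to construct, the drift of the continuity equation is prescribed and equals the tangent to $\gamma$, independently of the particular choice of $u_0$.

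I would first identify $\{\gamma\}\subset M$ with the circle $\mathbb R/\tau\mathbb Z$ via the time parametrization $s\mapsto\gamma(s)$, and let $\phi_t:\{\gamma\}\to\{\gamma\}$ denote the induced shift $\gamma(s)\mapsto\gamma(s+t)$. Define a curve of probability measures $m:\mathbb R\to\mathcal P(M)$ by $m(t):=(\phi_{t-T})_\# m_T$. By construction $m(T)=m_T$, the map $t\mapsto m(t)$ is $\tau$-periodic and continuous in $d_1$, and $\mathrm{supp}(m(t))\subset\{\gamma\}$. Next, picking any global viscosity solution $u_0$ of $H(x,Du_0)=c_0$ (existence from Fathi's weak KAM theory), set
\begin{equation*}
c:=c_0-\frac{1}{\tau}\int_0^\tau F(m(s))\,ds,\qquad v(t):=\int_0^t\bigl(F(m(s))+c-c_0\bigr)\,ds,
\end{equation*}
and define $u(x,t):=u_0(x)+v(t)$. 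The choice of $c$ forces $v(\tau)=0$, so $v$ and hence $u$ are $\tau$-periodic in $t$; since $v$ depends only on $t$, $u$ remains a viscosity solution of $\partial_t u+H(x,Du)=v'(t)+c_0=F(m(t))+c$. The initial datum $\varphi=u(\cdot,0)=u_0$ is not prescribed by the theorem and is obtained as a byproduct of the construction, while $c=c(m_T)$ depends on $m_T$ only through the orbit $\{m(s):s\in[0,\tau]\}$.

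Non-triviality follows from $m_T\in\mathfrak N$: the projected minimal measures supported on $\{\gamma\}$ are precisely the $\phi_t$-invariant probability measures on the orbit (the unique Mather measure on $\{\gamma\}$ up to normalization), so $m_T\notin\mathfrak M$ forces $m_T$ not to be $\phi_t$-invariant, making $t\mapsto m(t)$ genuinely non-constant. The most delicate step, which I expect to be the main technical obstacle, is verifying the continuity equation (\ref{ME2}) in the distributional sense of Definition \ref{ws}, since $m(t)$ is in general singular. This is handled by restricting to $\mathrm{supp}(m(t))\subset\{\gamma\}$, where $Du_0$ is defined and Lipschitz by the Lipschitz graph property of $\tilde{\mathcal M}$ together with (\ref{Du-}): for $\varphi\in C^\infty_c(M\times(0,T])$, pushing the inner integral through $\phi_{t-T}$ and using the chain rule $\frac{d}{dt}\varphi(\phi_{t-T}(y),t)=\partial_t\varphi+\langle\dot\gamma,D\varphi\rangle$ along the orbit reduces the left-hand side of Definition \ref{ws} to $\int_M\int_0^T\frac{d}{dt}\varphi(\phi_{t-T}(y),t)\,dt\,dm_T(y)$, which collapses to $\int_M\varphi(y,T)\,dm_T(y)$ because $\varphi(\cdot,0)=0$.
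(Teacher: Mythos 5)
Your construction coincides with the paper's own proof: the same pushforward $\bar m(t)=\Phi(t,T,\cdot)_\#m_T$ along the periodic orbit, the same ansatz $u(x,t)=u_0(x)+v(t)$ with the same normalizing constant $c(m_T)=c_0-\frac{1}{\tau}\int_0^\tau F(\bar m(s))\,ds$, and the same non-triviality argument via Ma\~n\'e's theorem that invariant measures supported on the Mather set are minimal (combined with the Lipschitz graph property to pass between $\{\gamma\}$ and $\Gamma$). The only cosmetic difference is that you verify the continuity equation by a direct chain-rule computation where the paper cites Cardaliaguet's notes; both are valid.
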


\begin{result}\label{m2}
Assume $M=\mathbb T$, (H1)-(H3)(F). Given $m_T\in\mathcal P(M)\backslash\mathfrak M$, then there is $c(m_T)\in\mathbb R$ such that the system (\ref{ME1})-(\ref{ME3}) has a non-trivial time periodic solution $(\bar u(x,t),\bar m(t))$ when $c=c(m_T)$. If $m_T$ is an absolutely continuous measure with respect to the Lebesgue measure on $\mathbb T$, then it has the density $m_T(x)$, and we have
\begin{itemize}
\item [(1)] 
$\bar u(x,t)\in C^1(\mathbb T\times[0,T])$ is unique up to a constant, $\bar m(x,t)$ is unique, and $c(m_T)$ is the unique constant $c\in\mathbb R$ such that (\ref{ME1})-(\ref{ME3}) has time periodic solutions.
\item [(2)] The map $m_T\mapsto c(m_T)$ is Lipschitz continuous in $d_1$.
\item [(3)] If $m_T(x)\in C^1(\mathbb T)$, then $\bar m(x,t)\in C^1(\mathbb T\times[0,T])$.
\item [(4)] For each $\varphi\in C(\mathbb T)$ and $c\in\mathbb R$, there is a unique weak solution $(u(x,t),m(t))\in C(\mathbb T\times[0,T])\times Lip([0,T],\mathcal P(\mathbb T))$ of (\ref{ME1})-(\ref{ME3}). Let $c=c(m_T)$. If $m_T(x)$ is bounded, then for each $t\in \mathbb [0,+\infty)$, we have
\[u(x,s)-\int_0^{T-t} F(m(\tau))d\tau\to\ \bar u(x,s)-\int_0^{T-t} F(\bar m(\tau))d\tau\quad \textrm{as}\quad T\to+\infty\]
uniformly in $(x,s)\in \mathbb T\times[T-t,T]$ and
\[d_1(m(s),\bar m(s))\to 0\quad \textrm{as}\quad T\to+\infty\]
uniformly in $s\in [T-t,T]$. Here $(\bar u(x,t), \bar m(t))$ is a time periodic solution.
\end{itemize}
\end{result}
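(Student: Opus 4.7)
The plan is to reduce most of the theorem to Theorem \ref{m1}, to use the one-dimensional structure of $\mathbb{T}$ together with (H3), and, crucially, to exploit the $x$-independence of $F$ to decouple the Hamilton-Jacobi equation from the measure. Since $M=\mathbb{T}$ and (H3) provides a non-trivial $\tau$-periodic orbit, Lemma \ref{uC2} together with the fact that a non-trivial periodic orbit in $T\mathbb{T}$ must have nonzero winding number shows that the projection $\gamma$ covers all of $\mathbb{T}$; thus the support condition in the definition of $\mathfrak{N}$ is vacuous and $\mathfrak{N}=\mathcal{P}(\mathbb{T})\setminus\mathfrak{M}$, so the existence of $(\bar u,\bar m)$ is immediate from Theorem \ref{m1}. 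For an explicit description, let $u_0$ be the weak KAM solution of $H(x,Du_0)=c_0$, which is unique up to an additive constant because the Aubry set fills $\mathbb{T}$; by (\ref{Du-}) and the $C^2$ regularity of $\gamma$, the velocity $v(x):=\partial_p H(x,Du_0(x))$ lies in $C^1(\mathbb{T})$, $u_0\in C^2(\mathbb{T})$, and its flow $X_t$ satisfies $X_\tau=\mathrm{id}$. Setting $\bar m(t):=(X_{t-T})_\#m_T$ and $\bar u(x,t):=u_0(x)+\int_T^t(F(\bar m(s))+c-c_0)\,ds$, one checks directly that $(\bar u,\bar m)$ solves (\ref{ME1})--(\ref{ME2}) with $\bar m(T)=m_T$, and $\tau$-periodicity of $\bar u$ forces $c=c(m_T):=c_0-\tau^{-1}\int_0^\tau F(\bar m(s))\,ds$.

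Parts (1)--(3) are then bookkeeping on this formula. The $C^1$ regularity and uniqueness modulo constants of $\bar u$ follow from $u_0\in C^2$ and its uniqueness modulo constants; $\bar m$ is unique because $v$ is intrinsically determined by $\tilde{\mathcal{M}}$, and $c(m_T)$ is the unique constant compatible with $\tau$-periodicity. For (2), the push-forward $m_T\mapsto(X_{s-T})_\# m_T$ is Lipschitz in $d_1$ on the window $[T-\tau,T]$, with constant bounded by $e^{\tau\,\mathrm{Lip}(v)}$, so combining with (F) gives the Lipschitz continuity of $m_T\mapsto c(m_T)$. For (3), if $m_T\in C^1(\mathbb{T})$, the density $\bar m(x,t)=m_T(X_{T-t}(x))\,|\partial_x X_{T-t}(x)|$ is $C^1$ in $(x,t)$ from the $C^1$ regularity of the diffeomorphism $X_{T-t}$.

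The content of part (4) is the main obstacle. For uniqueness and existence, I would use the identity that the substitution $\tilde u(x,t):=u(x,t)-\int_0^t(F(m(s))+c)\,ds$ converts (\ref{ME1}) into the autonomous equation $\partial_t\tilde u+H(x,D\tilde u)=0$ with $\tilde u(x,0)=\varphi(x)$, decoupled from $m$ and with a unique viscosity solution by classical Crandall--Lions theory; since $Du=D\tilde u$, the continuity equation has velocity depending only on $\varphi$, so solving backward from $m(T)=m_T$ by Ambrosio's transport theory (justified by the semiconcavity of $\tilde u$) yields a unique $m\in\mathrm{Lip}([0,T],\mathcal{P}(\mathbb{T}))$, and $u$ is then recovered by integrating $F(m)+c$. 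For the asymptotics, let $\tilde{\bar u}$ be defined analogously from $\bar u$; a direct computation using $c=c(m_T)$ shows $\tilde{\bar u}(x,t)+c_0 t\equiv u_0(x)+C(T)$ for a free additive constant $C(T)$. On the other hand, by the weak KAM convergence theorem on $\mathbb{T}$ with filled Aubry set, $\tilde u(\cdot,s)+c_0 s$ converges uniformly to $u_0+C_\varphi$ as $s\to\infty$, where $C_\varphi$ depends only on $\varphi$; choosing $C(T)=C_\varphi$ gives $\|\tilde u(\cdot,s)-\tilde{\bar u}(\cdot,s)\|_\infty\to 0$ uniformly on $[T-t,T]$. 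Combining this with the identity
\[u(x,s)-\int_0^{T-t}F(m(\tau))\,d\tau=\tilde u(x,s)+cs+\int_{T-t}^s F(m(\tau))\,d\tau\]
and its analogue for $\bar u$, together with (F), reduces the $u$-convergence to showing $\sup_{s\in[T-t,T]}d_1(m(s),\bar m(s))\to 0$. This last estimate I would obtain by comparing the two backward continuity equations with common final datum $m_T$: the velocities $\partial_p H(\cdot,D\tilde u(\cdot,s))$ and $v$ become close in an averaged sense on $[T-t,T]$ as $T\to\infty$ (a consequence of the weak KAM convergence and semiconcavity of $\tilde u$), and a Gronwall-type argument in $d_1$ over the fixed-length window $[T-t,T]$ delivers the bound, with the boundedness of $m_T(x)$ used to control the propagation through the $BV$-in-$x$ velocity field. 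The principal technical hurdle is quantifying this gradient convergence strongly enough to transfer it to a $d_1$ estimate for backward-transported measures, and this is precisely what forces the boundedness assumption on $m_T(x)$.
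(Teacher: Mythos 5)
Your overall architecture (reduce existence to Theorem \ref{m1} via $\{\gamma\}=\mathbb T$, the explicit formula for $(\bar u,\bar m)$, the Gronwall estimate on the flow for Item (2), the change-of-variables density formula for Item (3), and the decoupling substitution for existence/uniqueness in Item (4)) matches the paper. But there are two places where the argument you sketch does not close, and in both the paper does something specific that you omit.

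First, Item (1) asserts that $c(m_T)$ is the unique constant for which \emph{any} time periodic solution exists, and that $\bar u$, $\bar m$ are unique among \emph{all} time periodic solutions — not merely within the family you construct. Your justification (``uniqueness modulo constants of $u_0$'', ``$c(m_T)$ is the unique constant compatible with $\tau$-periodicity'') only rules out other solutions of the same explicit form. The missing step is to show that an arbitrary periodic solution $(\hat u,\hat m)$ with arbitrary period $\bar\tau$ and arbitrary $c$ must have initial datum $\varphi=u_0$. The paper gets this from the Lax--Oleinik representation (\ref{sole}) together with Lemma \ref{equi}: periodicity gives $D\varphi(x)=D\hat u(x,n\bar\tau)=Dw(x,n\bar\tau)$ for all $n$, and $Dw(x,n\bar\tau)\to Du_0(x)$ a.e., forcing $D\varphi=Du_0$ and hence $\varphi=u_0$ up to a constant; only then do the formula for $\hat u$, the uniqueness of $\hat m$ (via Lemma \ref{unim}) and the value of $c$ follow. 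Without this step the uniqueness claims in Item (1) are unproved.

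Second, in Item (4) you correctly identify the $d_1$-convergence $\sup_{s\in[T-t,T]}d_1(m(s),\bar m(s))\to0$ as the crux, but the route you propose — a Gronwall-type stability estimate in $d_1$ between two continuity equations whose velocity fields are close only ``in an averaged sense'', with one velocity merely $BV$ in $x$ — is exactly the step that is hard to make rigorous, and you leave it open. The paper sidesteps the PDE-level comparison entirely: both measures are push-forwards of the \emph{same} $m_T$ under flows of characteristics, $m(s)=\Psi(s,T,\cdot)_{\#}m_T$ and $\bar m(s)=\Phi(s,T,\cdot)_{\#}m_T$, where $(\Psi(s,T,x),p(s))$ solves the Hamilton equations (\ref{Heq}) with final datum $(x,Dw(x,T))$ and $\Phi$ solves them with final datum $(x,Du_0(x))$. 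Lemma \ref{equi} gives $Dw(x,T)\to Du_0(x)$ for a.e.\ $x$, so continuous dependence of ODE solutions on final data gives $\Psi(s,T,x)\to\Phi(s,T,x)$ pointwise a.e., uniformly on the fixed-length window; then
\[
d_1(m(s),\bar m(s))\le\int_{\mathbb T}\max_{s\in[T-t,T]}d(\Psi(s,T,x),\Phi(s,T,x))\,m_T(x)\,dx\to0
\]
by dominated convergence, and this is precisely where the boundedness of the density $m_T(x)$ enters (to dominate against Lebesgue measure, since the pointwise convergence is only Lebesgue-a.e.). You should replace the proposed Gronwall argument with this characteristics argument; as written, that step is a genuine gap.
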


\begin{remark}
The main differences between the present work and \cite{tp00} are
\begin{itemize}
\item The method in the present paper is from the weak KAM theory and the Aubry-Mather theory instead of finding minimal points of an energy functional.

\item In \cite{tp00}, the potential of the coupling term is a sort of ``double-well" potential. Then the convexity of the potential does not hold, which implies that the coupling term is not monotone. In the present paper, as stated before, the monotone condition (\ref{monoc}) holds.

\item The time periodic solutions given in the present paper are weak solutions defined in Definition \ref{ws}, which may have more regularity. In particular, for $M=\mathbb T$, the time periodic solutions can be classical solutions of class $C^1$, see Items (1)(3) in Theorem \ref{m2}. 
\item Item (4) in Theorem \ref{m2} considers the uniform convergence of $u$ and the convergence of $m$ in $d_1$ as the horizon $T\to+\infty$.
\end{itemize}
\end{remark}
At last, we provide a specific example of the non-trivial smooth time periodic solution of first order mean field games.
\begin{example}
Consider $x\in\mathbb T^n\simeq [0,1]^n$ and
\begin{numcases}{}
  \partial_t u(x,t)+\sum_{i=1}^n\bigg(\frac{1}{2}(\partial_{x_i} u(x,t))^2-\partial_{x_i} u(x,t)\bigg)=F(m(t)),\label{E1}\\
  \partial_t m(x,t)+\sum_{i=1}^n\partial_{x_i}\big((\partial_{x_i} u(x,t)-1)m(x,t)\big)=0.\label{E2}
\end{numcases}
where
\[F(m)=\int_{[0,1]^n} 4\pi\cos\big(2\pi \sum_{i=1}^nx_i\big)m(x)dx.\]
It is direct to check that
\begin{equation*}
  \left\{
   \begin{aligned}
   &\bar u(x,t)=\sin(2\pi t),
   \\
   &\bar m(x,t)=1+\cos\big(2\pi (\sum_{i=1}^nx_i+t)\big).
   \\
   \end{aligned}
   \right.
\end{equation*}
is a non-trivial time periodic solution of the above mean field games model (\ref{E1})-(\ref{E2}). At the same time, there is a stationary solution $(u_0,m_0)=(0,1)$.
\end{example}

\section{Proof of Theorem \ref{m1}}\label{p1}


Let $\Gamma(t)=(\gamma(t),\dot \gamma(t))$ with $t\in\mathbb R$ be a periodic orbit on $\tilde{\mathcal M}$ with period $\tau$. Consider $m_T\in\mathfrak N$, that is, $m_T\notin\mathfrak M$ supported on $\{\gamma\}\subseteq\mathcal M$. For $x=\gamma(t_0)$, define
\[\Phi(t,T,x)=\gamma(t+t_0-T).\]
Let $u_0(x)$ be a viscosity solution of (\ref{H=c0}). Define
\begin{equation}\label{sol}
  \left\{
   \begin{aligned}
   &\bar u(x,t)=u_0(x)+\int_0^tF(\bar m(s))ds-\frac{t}{\tau}\int_0^\tau F(\bar m(s))ds,
   \\
   &\bar m(t)=\Phi(t,T,\cdot)_{\#}m_T.
   \\
   \end{aligned}
   \right.
\end{equation}
\begin{lemma}
$\bar m(t)$ is periodic with period $\tau$.
\end{lemma}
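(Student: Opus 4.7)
The plan is to verify the claim essentially by unpacking the definition of $\Phi(t,T,\cdot)$ and using the fact that $\gamma$ has period $\tau$. The main content is to check that $\Phi(t+\tau,T,\cdot)=\Phi(t,T,\cdot)$ as a map on $\{\gamma\}$, after which pushing forward $m_T$ gives the required periodicity.

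First I would argue that the map $\Phi(t,T,\cdot):\{\gamma\}\to\{\gamma\}$ is well defined. The potential ambiguity is in the choice of the parameter $t_0$ with $x=\gamma(t_0)$, which is determined only modulo $\tau$; but since $\gamma(s+\tau)=\gamma(s)$ for all $s\in\mathbb{R}$, any two choices $t_0$ and $t_0+k\tau$ produce the same image $\gamma(t+t_0-T)=\gamma(t+t_0+k\tau-T)$. Hence $\Phi(t,T,\cdot)$ is an unambiguous continuous self-map of the support $\{\gamma\}$ of $m_T$.

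Next, using $\tau$-periodicity of $\gamma$ directly, for each $x=\gamma(t_0)\in\{\gamma\}$ we compute
\begin{equation*}
\Phi(t+\tau,T,x)=\gamma(t+\tau+t_0-T)=\gamma(t+t_0-T)=\Phi(t,T,x).
\end{equation*}
Therefore $\Phi(t+\tau,T,\cdot)=\Phi(t,T,\cdot)$ as maps $\{\gamma\}\to\{\gamma\}$.

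Finally, since $\mathrm{supp}(m_T)\subset\{\gamma\}$, applying the push-forward under two maps that agree on $\mathrm{supp}(m_T)$ yields the same measure:
\begin{equation*}
\bar m(t+\tau)=\Phi(t+\tau,T,\cdot)_\# m_T=\Phi(t,T,\cdot)_\# m_T=\bar m(t),
\end{equation*}
which gives the $\tau$-periodicity of $\bar m(t)$. I do not expect a substantive obstacle here; the only thing to be mildly careful about is the independence of $\Phi$ on the representative $t_0$, which follows automatically from $\gamma$ being $\tau$-periodic.
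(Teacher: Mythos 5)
Your proof is correct and follows essentially the same route as the paper: both reduce the claim to the identity $\Phi(t+\tau,T,x)=\Phi(t,T,x)$ on $\{\gamma\}$, which is immediate from the $\tau$-periodicity of $\gamma$, and then push forward $m_T$. Your additional remark that $\Phi$ is well defined independently of the representative $t_0$ is a small but welcome point the paper leaves implicit.
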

\begin{proof}
By the definition of push-forward, it is direct to see that for all $f\in C(M)$, we have
\[\int_M f(x)d\bar m(t)=\int_{\gamma}f(x)d\bar m(t)=\int_{\gamma}f(\Phi(t,T,x))dm_T,\]
and
\begin{align*}
\int_M f(x)d\bar m(t+\tau)&=\int_{\gamma}f(\Phi(t+\tau,T,x))dm_T
\\ &=\int_{\gamma}f(\Phi(t,T,x))dm_T=\int_M f(x)d\bar m(t),
\end{align*}
which implies that $t\mapsto \bar m(t)$ is periodic with period $\tau$.
\end{proof}

Note that $u_0$ is differentiable on $\gamma$, where $Du_0(x)$ is Lipschitz continuous and is given by (\ref{Du-}). Since $m_T$ is supported on $\gamma$ and $\gamma$ is invariant under $\Phi(t,T,\cdot)$, similar to \cite[Section 4.2]{Car5}, we can prove that $\bar m(t)$ is the unique weak solution of
\begin{equation*}\label{e2}
  \partial_t m(x,t)+\textrm{div}(\partial_pH(x,Du_0(x))m(x,t))=0,\quad m(x,T)=m_T(x).
\end{equation*}
Also, one can check that $\bar u(x,t)$ is the unique solution of
\[\partial_t u(x,t)+H(x,Du(x,t))=F(\bar m(t))+c_0-\frac{1}{\tau}\int_0^\tau F(\bar m(s))ds,\quad u(x,0)=u_-(x).\]
Note that $Du(x,t)=Du_0(x)$, we get a time periodic solution $(\bar u(x,t),\bar m(x,t))$ of (\ref{ME1})-(\ref{ME3}) with period $\tau$, where
\begin{equation}\label{cmt}
  c(m_T)=c_0-\frac{1}{\tau}\int_0^\tau F(\bar m(s))ds.
\end{equation}
\begin{lemma}
The periodic solution $\bar m(t)$ is non-trivial.
\end{lemma}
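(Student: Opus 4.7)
The plan is to argue by contradiction: suppose that $\bar m(t)$ is trivial, i.e.\ constant in $t$, so that $\bar m(t)=m_T$ for every $t\in[0,T]$. I will show this forces $m_T$ to be a projected Mather measure, contradicting the assumption $m_T\in\mathfrak N$.

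First, I would lift $m_T$ from $\gamma$ to the periodic orbit $\Gamma$. Since $\tilde{\mathcal M}$ is a Lipschitz graph over $\mathcal M$, the projection $\pi|_\Gamma\colon\Gamma\to\gamma$ is a homeomorphism; in particular $\gamma$ is embedded, which is also what makes the transport map $\Phi(t,T,\cdot)$ well-defined in the first place. Hence there is a unique $\tilde m_T\in\mathcal P(\Gamma)$ with $\pi_\#\tilde m_T=m_T$. Under this lift, the map $\Phi(t,T,\cdot)$ on $\gamma$ coincides with the restriction of the Euler--Lagrange flow $\Phi^L_{t-T}$ to $\Gamma$, so that $\bar m(t)=\pi_\#(\Phi^L_{t-T})_\#\tilde m_T$. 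The hypothesis $\bar m(t)=m_T$ for every $t$, combined with the uniqueness of the lift, then forces $(\Phi^L_{t-T})_\#\tilde m_T=\tilde m_T$ for all $t$; that is, $\tilde m_T$ is $\Phi^L_t$-invariant.

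The contradiction will then come from two standard facts. Since $\Gamma$ is a single closed orbit of period $\tau$, the system $(\Gamma,\Phi^L_t)$ is conjugate to translation on $\mathbb R/\tau\mathbb Z$, whose only invariant probability measure is the normalized Haar measure; hence $\tilde m_T=\tilde\mu_\Gamma:=\tau^{-1}\int_0^\tau\delta_{\Gamma(s)}\,ds$. Moreover, any $\Phi^L_t$-invariant probability measure supported on $\tilde{\mathcal M}$ is a minimal measure: combining $H(x,Du_0)=c_0$ with Fenchel's equality and (\ref{Du-}) gives $L(x,\dot x)+c_0=\langle\dot x,Du_0(x)\rangle$ on $\tilde{\mathcal M}$, and integrating against $\tilde m_T$ using the invariance identity $\int\langle\dot x,d\varphi(x)\rangle\,d\mu=0$ recalled in the introduction yields $\int L\,d\tilde m_T=-c_0$. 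Thus $\tilde\mu_\Gamma\in\tilde{\mathfrak M}$, and so $m_T=\pi_\#\tilde\mu_\Gamma\in\mathfrak M$, contradicting $m_T\in\mathfrak N$.

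The only delicate step will be the passage from $\bar m(t)=m_T$ to the $\Phi^L_t$-invariance of $\tilde m_T$, which rests on the graph property of the Mather set and the embeddedness of $\gamma$; both are consequences of the result of \cite{Mat} cited in the introduction. Everything else is a direct application of the Mather-theoretic material already set up there.
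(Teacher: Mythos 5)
Your proof is correct and is essentially the contrapositive of the paper's argument: both rest on the unique lift of $m_T$ to $\Gamma$ via the Lipschitz graph property, the identification of $\Phi(t,T,\cdot)$ with the projected Euler--Lagrange flow on $\{\gamma\}$, and Ma\~n\'e's fact that invariant measures supported on $\tilde{\mathcal M}$ are minimal (which the paper cites from \cite{Man2} and you reprove in this special case via Fenchel's equality). The paper runs the implication directly --- $\mu_T$ not invariant forces $\bar m(t_1)(A)\neq m_T(A)$ for some $t_1$ and $A$ --- whereas you assume constancy and derive invariance, but the mathematical content is the same.
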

\begin{proof}
First we recall that $\tilde{\mathcal M}$ is a Lipschitz graph over $\mathcal M$, then the lift $\mu_T$ of $m_T$ to $\tilde{\mathcal M}$ is unique. Since $m_T\in \mathfrak N$, $\mu_T\notin \tilde{\mathfrak M}$. According to \cite{Man2}, all invariant measures supported on $\tilde{\mathcal M}$ are minimal measures, $\mu_T$ is not invariant under $\Phi^L_t$. Since $\textrm{supp}(\mu_T)\subset \Gamma$, $\Phi^L_{t-T\#}\mu_T$ is periodic in $t$ with period $\tau$. For every $(x,\dot x)=\Gamma(t_0)\in \textrm{supp}(\mu_T)$, we have
\[\Phi^L_{t-T}(\Gamma(t_0))=\Gamma(t+t_0-T).\]
Let $y=\Phi(t,T,x)$, define the inverse $x=\Phi^{-1}(t,T,y)$.  Then we have
\[\pi^{-1}\Phi^{-1}(t,T,x)=\Gamma(t_0-(t-T))=(\Phi^L_{t-T})^{-1}\pi^{-1}(x),\quad \forall x\in\{\gamma\}.\]
Thus, for every $A\subseteq \{\gamma\}$, we have

\begin{equation}\label{111}
\begin{aligned}
\pi_{\#}(\Phi^L_{t-T\#}\mu_T)(A)&=\mu_T((\Phi^L_{t-T})^{-1}(\pi^{-1}(A)))
\\ &=\mu_T(\pi^{-1}(\Phi^{-1}(t,T,A)))=\Phi(t,T,\cdot)_{\#}(\pi_{\#}\mu_T)(A).
\end{aligned}
\end{equation}
Since $\pi^{-1}$ is a bi-Lipschitz continuous map on $\gamma$ by \cite{Mat}, and $\mu_T$ is not invariant under $\Phi^L_t$, there is $A\subseteq \{\gamma\}$ and $t_1\in (T-\tau,T)$ such that
\[\Phi^L_{t_1-T\#}\mu_T(\pi^{-1}(A))\neq \mu_T(\pi^{-1}(A)).\]
Therefore, by (\ref{111}) we get
\begin{align*}
m_T(A)&=\pi_{\#}\mu_T(A)=\mu_T(\pi^{-1}(A))
\\ &\neq \Phi^L_{t_1-T\#}\mu_T(\pi^{-1}(A))=\pi_{\#}(\Phi^L_{t_1-T\#}\mu_T)(A)
=\Phi(t_1,T,\cdot)_{\#}(\pi_{\#}\mu_T)(A)
\\&=\Phi(t_1,T,\cdot)_{\#}m_T(A)=\bar m(t_1)(A).
\end{align*}
Thus, $\bar m(t)$ is non-trivial.
\end{proof}


\section{Basic properties of (\ref{ME1})-(\ref{ME3})}

According to \cite{Fat-b}, the viscosity solution of
\begin{equation*}
  \left\{
   \begin{aligned}
   &\partial_t w(x,t)+H(x,D w(x,t))=0,\quad (x,t)\in M\times(0,+\infty).
   \\
   &u(x,0)=\varphi(x),\quad x\in M.
   \\
   \end{aligned}
   \right.
\end{equation*}
has the following representation formula called the Lax-Oleinik formula
\begin{equation}\label{LO}
  w(x,t)=\min_{z\in M}\bigg\{\varphi(z)+h_t(z,x)\bigg\}=\min_{\gamma}\bigg\{\varphi(\gamma(0))+\int_0^t L(\gamma(s),\dot\gamma(s))ds\bigg\},
\end{equation}
where $h_t$ is defined by (\ref{ht}) and $\gamma:[0,t]\to M$ is taken among absolutely continuous curves with $\gamma(t)=x$. By the Tonelli Theorem, the minimum in the definition of $w(x,t)$ can be achieved, and we call the curve achieving the minimum a minimizer. We have known that $w(x,t)$ is uniformly bounded and equi-Lipschitz continuous for all $t>1$. Moreover, $w(x,t)+c_0t$ uniformly converges to a solution of (\ref{H=c0}) as $t\to+\infty$. For these results, one can refer to \cite[Proposition 4.6.6 and Theorem 6.3.1]{Fat-b}. 

Similar to \cite[Section 4.1]{Car5}, for each $(x,t)\in M\times[0,T]$, there is a measurable selection $\gamma(\cdot;x,t):[0,t]\to M$ of minimizers of $w(x,t)$. Then we can define the optimal flow as
\[\Psi(s,t,x)=\gamma(s;x,t),\quad s\in[0,t].\]
Similar to the proof of \cite[Theorem 4.18]{Car5}, we can prove that
\begin{lemma}\label{unim}
If $m_T\in\mathcal P(M)$ is absolutely continuous with respect to the Lebesgue measure, $m(t)=\Psi(t,T,\cdot)_{\#}m_T$ is the unique weak solution of
\begin{equation}\label{divm}
  \partial_t m(x,t)+\textrm{div}(\partial_p H(x,Dw(x,t))m(x,t))=0,\quad m(T)=m_T.
\end{equation}
\end{lemma}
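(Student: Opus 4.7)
The plan is to prove Lemma \ref{unim} in two stages: existence of the push-forward as a weak solution (essentially a direct computation using the optimal flow), and uniqueness via Ambrosio's superposition principle combined with the variational characterization of Lax--Oleinik minimizers.

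For existence, I would begin by recalling that the viscosity solution $w(x,t)$ of the Hamilton--Jacobi equation is semiconcave in $x$ for $t>0$ and that $Dw(\cdot,t)$ exists Lebesgue-almost everywhere. Along the optimal curve $s\mapsto \Psi(s,T,x)$ associated with the Lax--Oleinik minimization (\ref{LO}) at $(x,T)$, it is classical that $w(\cdot,s)$ is differentiable at $\Psi(s,T,x)$ for $s\in(0,T)$ and
\[\dot\Psi(s,T,x)=\partial_p H(\Psi(s,T,x),Dw(\Psi(s,T,x),s)).\]
With $m(t):=\Psi(t,T,\cdot)_\# m_T$, testing against $\varphi\in C^\infty_c(M\times(0,T])$ and differentiating in $t$ yields, after integrating in $t$ and using Fubini,
\[\int_M\varphi(x,T)dm_T=\int_0^T\!\!\int_M\bigl(\partial_t\varphi(x,t)+\langle\partial_p H(x,Dw(x,t)),D\varphi(x,t)\rangle\bigr)dm(t)(x),\]
which is (\ref{divm}) in the distributional sense. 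Absolute continuity of $m_T$ together with the Lipschitz character of $\Psi(t,T,\cdot)$ ensures that $m(t)$ is supported on points where $Dw(\cdot,t)$ exists, so the integrand is well-defined $m(t)$-a.e.

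For uniqueness, let $\tilde m$ be any weak solution of (\ref{divm}) with $\tilde m(T)=m_T$. Since $w$ is Lipschitz, the vector field $b(x,t):=\partial_p H(x,Dw(x,t))$ is uniformly bounded, and Ambrosio's superposition principle supplies a probability measure $\eta$ on $C([0,T],M)$ such that $\tilde m(t)=(e_t)_\#\eta$ for every $t\in[0,T]$, with $\eta$-a.e. curve $\sigma$ absolutely continuous and $\dot\sigma(t)=b(\sigma(t),t)$ for a.e.\ $t$. The next step is to argue that, for $\eta$-a.e.\ $\sigma$, the curve $\sigma$ is a minimizer in (\ref{LO}) for the endpoint $(\sigma(T),T)$: at times where $w(\cdot,t)$ is differentiable at $\sigma(t)$, the Hamilton equations for $(\sigma(t),Dw(\sigma(t),t))$ together with semiconcavity propagate differentiability along $\sigma$ and identify $\sigma$ with an Euler--Lagrange minimizer. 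Since $m_T=(e_T)_\#\eta$ is absolutely continuous, $w(\cdot,T)$ is differentiable at $\sigma(T)$ for $\eta$-a.e.\ $\sigma$, and the associated backward minimizer is unique. Disintegrating $\eta$ with respect to $e_T$ therefore produces a Dirac mass on the single curve $s\mapsto \Psi(s,T,\sigma(T))$, hence $\tilde m(t)=\Psi(t,T,\cdot)_\# m_T=m(t)$.

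The main obstacle is the uniqueness step: showing that $\eta$-almost every integral curve of $b$ coincides with a Lax--Oleinik minimizer, and that such a minimizer is unique from an $m_T$-typical endpoint. Both rely essentially on the semiconcavity of $w$, the almost everywhere differentiability of $w(\cdot,T)$ on the support of the absolutely continuous measure $m_T$, and the classical identification of backward characteristics at points of differentiability of the value function. With these ingredients in place, the argument follows the pattern of \cite[Theorem 4.18]{Car5}.
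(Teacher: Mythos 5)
Your proposal is correct and follows essentially the same route as the paper's own (outline of a) proof: existence by pushing $m_T$ forward along the optimal synthesis, using the interior differentiability of $w$ along minimizers from \cite{CS}, and uniqueness reduced to the $m_T$-a.e.\ uniqueness of the Lax--Oleinik minimizer from points where $w(\cdot,T)$ is differentiable, with the details deferred to \cite[Theorem 4.18]{Car5}. The only substantive difference is that you spell out the superposition-principle bridge from ``the optimal flow is $m_T$-a.e.\ unique'' to ``the weak solution is unique,'' which the paper leaves implicit in that citation.
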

\noindent \textit{Outline of the proof.} By \cite[Theorem 6.4.7 and Theorem 6.4.8]{CS}, $Dw(\Psi(s,t,x),s)$ exists for $s\in(0,t)$, and
\[Dw(\Psi(s,t,x),s)=\partial_{\dot x} L(\Psi(s,t,x),\partial_s\Psi(s,t,x)).\]
Similar to \cite[Lemma 4.15]{Car5}, $m(t)$ is a weak solution of (\ref{divm}). It remains to prove the uniqueness of the weak solution of (\ref{divm}). The proof is based on the fact that, if $w(x,T)$ is differentiable at $x$, then the minimizer $\gamma(\cdot;x,T):[0,T]\to M$ of $w(x,T)$ is unique (cf. \cite{CS}). Therefore, the optimal flow $\Psi(t,T,x)$ is uniquely given by $\gamma(t;x,T)$. Since $x\mapsto w(x,T)$ is Lipschitz continuous, the optimal flow $\Psi(t,T,x)$ is uniquely defined for almost all $x\in M$. Since $m_T$ is absolutely continuous with respect to the Lebesgue measure, the optimal flow $\Psi(t,T,x)$ is uniquely defined for $m_T$-almost all $x\in M$. Thus, the weak solution of (\ref{divm}) is uniquely given by $\Psi(t,T,\cdot)_{\#}m_T$.\qed

\begin{lemma}\label{unique}
For each $c\in\mathbb R$, $\varphi(x)\in C(M)$, and $m_T\in\mathcal P(M)$ which is absolutely continuous with respect to the Lebesgue measure, there is a unique weak solution $(u(x,t),m(x,t))\in C(M\times[0,T])\times Lip([0,T],\mathcal P(M))$ of (\ref{ME1})-(\ref{ME3}).
\end{lemma}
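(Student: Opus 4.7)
The plan is to exploit the crucial feature that $F:\mathcal{P}(M)\to\mathbb{R}$ does not depend on $x$, so that the right-hand side $F(m(t))+c$ of (\ref{ME1}) is a function of $t$ alone. This lets me decouple the system by a purely additive substitution, reducing it to a standard Hamilton-Jacobi equation plus a transport equation with a fixed velocity field.

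First I would show that if $(u,m)$ is any weak solution, then setting
\[\lambda(t):=\int_0^t (F(m(s))+c)\,ds,\qquad v(x,t):=u(x,t)-\lambda(t),\]
the function $v$ satisfies $\partial_t v+H(x,Dv)=0$ in the viscosity sense with initial datum $v(x,0)=\varphi(x)$. This is immediate from the definitions: if $v-\phi$ attains a local maximum at $(x,t)$, then $u-(\phi+\lambda)$ does too, and the subsolution inequality for $u$ with right-hand side $F(m(t))+c=\lambda'(t)$ translates into the subsolution inequality for $v$ with zero right-hand side; dually for supersolutions. The Lipschitz regularity of $\lambda$ follows from assumption (F) together with $m\in Lip([0,T],\mathcal{P}(M))$.

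Since the pure Hamilton-Jacobi equation for $v$ admits a unique viscosity solution, given by the Lax-Oleinik formula (\ref{LO}), the function $v$ is determined by $\varphi$ and $H$ alone, independently of $m$. Let $\Psi$ be the optimal flow associated with $v$, as introduced before Lemma \ref{unim}. Since $Du=Dv$ wherever these gradients exist, the continuity equation for $m$ becomes
\[\partial_t m+\textrm{div}(\partial_pH(x,Dv(x,t))m(x,t))=0,\qquad m(T)=m_T,\]
and Lemma \ref{unim} applies because $m_T$ is absolutely continuous. Hence $m(t)=\Psi(t,T,\cdot)_{\#}m_T$ is the unique weak solution of this equation. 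This yields uniqueness for the MFG system at one stroke: both $v$ and $m$ are forced, and then $u=v+\lambda$ is recovered through the explicit formula for $\lambda$.

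For existence I would run the same chain in reverse: define $v$ via (\ref{LO}), set $m(t):=\Psi(t,T,\cdot)_{\#}m_T$, and finally $u(x,t):=v(x,t)+\int_0^t(F(m(s))+c)\,ds$. The Lipschitz continuity of $t\mapsto m(t)$ in $d_1$ is inherited from the uniform boundedness of optimal velocities provided by (H1)-(H2): transporting via the plan $(\Psi(s,T,\cdot),\Psi(t,T,\cdot))_{\#}m_T$ yields $d_1(m(s),m(t))\leq C|s-t|$. The only mildly delicate point I anticipate is the careful verification of the viscosity sense under the substitution, but since the added term depends only on $t$ this is routine. The deeper message is that the $x$-independence of $F$ collapses the usual Schauder fixed-point argument for MFG well-posedness into a one-shot construction, so no genuine obstacle arises.
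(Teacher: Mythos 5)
Your proof is correct and follows essentially the same route as the paper: the paper likewise writes $u=w(x,t)+\int_0^tF(m(s))\,ds+ct$ with $w$ the Lax--Oleinik solution of the pure Hamilton--Jacobi equation, notes $Du=Dw$ so that Lemma \ref{unim} pins down $m(t)=\Psi(t,T,\cdot)_{\#}m_T$, and obtains the Lipschitz continuity of $t\mapsto m(t)$ from the uniform bound on the optimal velocities coming from energy conservation and (H2). One small repair: to see that $\lambda$ is Lipschitz you should use the boundedness of $F$ on the $w^*$-compact set $\mathcal P(M)$ (as the paper does), not the Lipschitz continuity of $t\mapsto m(t)$, which is part of the conclusion and a priori unavailable for an arbitrary weak solution in $L^1([0,T],\mathcal P(M))$.
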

\begin{proof}
For all $m\in\mathcal P(M)$, the operator norm of $m$ is bounded. Then $\mathcal P(M)$ is compact in the $w^*$-topology. By the $w^*$-compactness of $\mathcal P(M)$, $F(m)$ is bounded. For each measurable map $t\mapsto \hat m(t)$, $F(\hat m(s))\in L^1([0,T])$. Then according to \cite[Theorem 5.1]{Intr},
\[u(x,t;\hat m)=w(x,t)+\int_0^t F(\hat m(s))ds+ct\]
is the unique solution of
\[\partial_t u(x,t)+H(x,Du(x,t))=F(\hat m(t))+c,\quad u(x,0)=\varphi(x).\]
To see the uniqueness of solution of (\ref{ME1})-(\ref{ME3}), let $(u(x,t),m(x,t))$ be a solution. Note that $Du(x,t;\hat m)=Dw(x,t)$ for all $\hat m$, by Lemma \ref{unim}, $m(t)=\Psi(t,T,\cdot)_{\#}m_T$ is unique. Then the unique solution of (\ref{ME1})-(\ref{ME3}) is given by
\begin{equation}\label{sole}
  \left\{
   \begin{aligned}
   &u(x,t)=w(x,t)+\int_0^tF(m(s))ds+ct,
   \\
   &m(t)=\Psi(t,T,\cdot)_{\#}m_T.
   \\
   \end{aligned}
   \right.
\end{equation}
It remains to prove that $t\mapsto m(t)$ is Lipschitz continuous. We have
\begin{align*}
d_1(m(t),m(s))&=\sup_{\phi}\int_{M}\phi(x)d(m(t)-m(s))
\\ &=\sup_{\phi}\int_{M}(\phi(\Psi(t,T,x))-\phi(\Psi(s,T,x)))dm_T
\\ &\leq \int_Md(\Psi(t,T,x),\Psi(s,T,x))dm_T.
\end{align*}
By \cite[Theorem 6.3.3]{CS}, $(\Psi(t,T,x),p(t))$ satisfies (\ref{Heq}), where the dual arc is defined by
\[p(t)=\frac{\partial L}{\partial \dot x}(\Psi(t,T,x),\partial_t\Psi(t,T,x)).\]
For almost all $x\in M$ and $t\in[0,T]$, by \cite[Theorem 6.4.9]{CS} we have
\[H(\Psi(t,T,x),p(t))=H(x,Dw(x,T)).\]
Since $\|Dw(x,T)\|_\infty$ is bounded for each given $T>0$, $p(t)$ is uniformly bounded by (H2). Then
\[\partial_t\Psi(t,T,x)=\frac{\partial H}{\partial p}(\Psi(t,T,x),p(t))\]
is also bounded uniformly in $x\in M$. Therefore, \[d_1(m(t),m(s))\leq \|\partial_t\Psi(t,T,x)\|_\infty|t-s|,\] which implies that $m(t)\in Lip([0,T],\mathcal P(M))$.
\end{proof}

\begin{lemma}\label{equi}
The family $\{w(x,t)\}_{t\geq 1}$ is equi-semi-concave. Moreover, let $u_0:=\lim_{t\to+\infty}w(x,t)$. Then $Dw(x,t)\to Du_0(x)$ for almost every $x\in M$.
\end{lemma}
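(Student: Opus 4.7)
The plan is to first establish equi-semi-concavity via the semigroup property of the Lax-Oleinik formula, and then deduce almost-everywhere convergence of gradients from uniform convergence using a standard property of equi-semi-concave families.

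For the equi-semi-concavity of $\{w(\cdot,t)\}_{t\geq 1}$, I would use the dynamic programming identity
\[w(x,t)=\min_{z\in M}\{w(z,t-1)+h_1(z,x)\},\quad t\geq 1,\]
which follows directly from (\ref{LO}) by splitting the minimizing curve on $[0,t-1]$ and $[t-1,t]$. A classical Tonelli estimate (see \cite[Appendix]{CS} and \cite[Chapter 6]{Fat-b}) shows that $x\mapsto h_1(z,x)$ is semi-concave on $M$ with a constant $C>0$ that is \emph{independent} of $z\in M$; the key ingredients are (H1)-(H2), compactness of $M$, and the resulting uniform bound on the speeds of minimizers of $h_1(z,\cdot)$. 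Since the infimum of any family of semi-concave functions with a common constant $C$ is itself semi-concave with the same constant $C$, each $w(\cdot,t)$ with $t\geq 1$ is semi-concave with the same constant $C$, which is the desired equi-semi-concavity.

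For the a.e. convergence of gradients, recall from the discussion preceding the lemma that $w(\cdot,t)+c_0 t$ converges uniformly to a viscosity solution of (\ref{H=c0}); the lemma abbreviates this limit as $u_0$, and the linear-in-$t$ term $c_0 t$ is irrelevant for the $x$-gradient since $Dw(x,t)=D(w(x,t)+c_0t)$. The family $\{w(\cdot,t)+c_0 t\}_{t\geq 1}$ inherits the equi-semi-concavity with the same constant $C$, so $u_0$ is itself semi-concave with constant $C$ as a uniform limit. I would then invoke the standard fact (cf. \cite[Chapter 3]{CS}) that if $\{f_n\}$ and $f$ are semi-concave with a common constant and $f_n\to f$ uniformly, then $Df_n(x)\to Df(x)$ at every differentiability point of $f$. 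Since $u_0$ is semi-concave on $M$, it is differentiable almost everywhere, and the claim follows.

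The main technical obstacle is the uniform-in-$z$ semi-concavity constant for $h_1(z,x)$. This is a standard but non-trivial consequence of Tonelli calculus of variations together with compactness of $M$: one first shows that minimizers of $h_1(z,\cdot)$ have uniformly bounded velocities in $z$, then constructs a quadratic competitor by perturbing a minimizer near its right endpoint inside a single coordinate chart. The resulting estimate depends only on $t=1$ and universal bounds on $L$ over the compact set of admissible velocities, not on $z$. Once this uniformity is in hand, the rest of the argument is formal.
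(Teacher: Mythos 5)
Your proposal is correct and follows essentially the same route as the paper: both obtain equi-semi-concavity of $w(\cdot,t)$ from the Lax--Oleinik representation as an infimum of functions that are semi-concave with a uniform constant, and both conclude the a.e.\ convergence of gradients from the standard stability result for equi-semi-concave families (\cite[Theorem 3.3.3]{CS}). The only cosmetic difference is that the paper invokes the equi-semi-concavity of $(y,x)\mapsto h_t(y,x)$ for all $t\geq 1$ directly (citing \cite[Theorem B.7]{JAMS}), whereas you reduce to $h_1$ via the semigroup property; both are valid.
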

\begin{proof}
By \cite[Theorem B.7]{JAMS}, the map $(y,x)\mapsto h_t(y,x)$ is equi-semi-concave for all $t\geq 1$. Since the argument is local, without any loss of generality, we assume that $M$ is an open subset $U$ of $\mathbb R^n$. Let $x$, $y\in U$, $\lambda\in[0,1]$. We denote by $z_\lambda$ the minimal point in (\ref{LO}) with the value function equals $w(\lambda x+(1-\lambda)y,t)$. Then we have
\begin{align*}
&\lambda w(x,t)+(1-\lambda)w(y,t)-w(\lambda x+(1-\lambda)y,t)
\\ \leq& \lambda h_t(z_\lambda,x)+(1-\lambda)h_t(z_\lambda,y)-h_t(z_\lambda,\lambda x+(1-\lambda)y)
\\ \leq& C\lambda(1-\lambda)|x-y|^2,
\end{align*}
where $C$ is the semi-concave constant of $\{h_t\}_{t\geq 1}$. Therefore, $\{w(x,t)\}_{t\geq 1}$ is equi-semi-concave. The last statement holds according to \cite[Theorem 3.3.3]{CS} or \cite[Lemma 4.6]{Car5}.
\end{proof}

\section{Proof of Theorem \ref{m2}}


We first consider the uniqueness and smoothness of the solution of (\ref{H=c0}). 
\begin{lemma}\label{uC2}
Let (H1)(H2) hold, and $M=\mathbb T$. The Mather set $\tilde{\mathcal M}$ of (\ref{H=c0}) consists of either fixed points or a periodic orbit $\Gamma=(\gamma,\dot\gamma)$. For the last case, we have
\begin{itemize}
\item[(1)] $\mathcal M=\{\gamma\}=\mathbb T$. Let $\bar\gamma:\mathbb R\to\mathbb R$ be the lift of $\gamma:\mathbb R\to\mathbb T$ on the cover space $\mathbb R$ over $\mathbb T$. Then $\bar\gamma:\mathbb R\to\mathbb R$ is monotone and $\dot\gamma\neq 0$.
\item[(2)] The viscosity solution $u_0$ of (\ref{H=c0}) is of class $C^2$ and unique up to a constant.
\end{itemize}
\end{lemma}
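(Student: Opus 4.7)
The plan is to use the Lipschitz graph structure of $\tilde{\mathcal M}$ over its projection $\mathcal M\subseteq\mathbb T$ together with the flow-invariance of $\tilde{\mathcal M}$ to reduce the Euler--Lagrange dynamics on $\tilde{\mathcal M}$ to a one-dimensional autonomous ODE on $\mathcal M$. Writing $\tilde{\mathcal M}=\{(x,v(x)):x\in\mathcal M\}$ with $v$ Lipschitz, any orbit $(\gamma(t),\dot\gamma(t))\subset\tilde{\mathcal M}$ must satisfy $\dot\gamma(t)=v(\gamma(t))$, so on $\tilde{\mathcal M}$ the dynamics reduces to the scalar ODE $\dot x=v(x)$. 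Cauchy--Lipschitz for this ODE forces any zero $v(x_0)=0$ to produce the constant solution $x(t)\equiv x_0$; by invariance of $\tilde{\mathcal M}$, this orbit coincides with the $\Phi^L_t$-orbit of $(x_0,0)$, so $(x_0,0)$ is itself a fixed point of the full Euler--Lagrange flow.

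This gives the dichotomy directly: either (a) $v\equiv 0$ on $\mathcal M$ and $\tilde{\mathcal M}$ consists only of fixed points, or (b) there is some $x_0\in\mathcal M$ with $v(x_0)\neq 0$, whose $\Phi^L_t$-orbit is then non-trivial. In case (b), continuity of $v$ along the orbit forces $v$ to have constant sign there; WLOG $\dot\gamma>0$, so $\bar\gamma$ is strictly increasing. Since $\gamma$ is $\tau$-periodic on $\mathbb T$, $\bar\gamma(t+\tau)-\bar\gamma(t)$ is a positive integer, whence $\bar\gamma(\mathbb R)=\mathbb R$ and $\{\gamma\}=\mathbb T$. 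In particular $\mathcal M=\mathbb T$, proving (1). Cases (a) and (b) cannot coexist in the same Mather set: a fixed point $(x_*,0)\in\tilde{\mathcal M}$ would require $v(x_*)=0$, but in case (b) the graph property on $\mathcal M=\mathbb T$ already fixes $v(x_*)\neq 0$. The same idea rules out a second non-trivial periodic orbit, as it would live on the same Lipschitz graph $v$ over $\mathbb T$ and hence coincide with $\Gamma$ up to time translation by ODE uniqueness.

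For (2), formula (\ref{Du-}) gives $Du_0(x)=\partial_{\dot x}L(x,v(x))$ for all $x\in\mathcal M=\mathbb T$, with $v$ Lipschitz and nowhere zero. Legendre duality yields $\partial_p H(x,Du_0(x))=v(x)\neq 0$ throughout $\mathbb T$. Applying the implicit function theorem to the $C^2$ equation $H(x,p)=c_0$ at each $x\in\mathbb T$ then provides a local $C^1$ branch $p=p(x)$, which must coincide with $Du_0$; hence $Du_0\in C^1(\mathbb T)$ and $u_0\in C^2(\mathbb T)$. Uniqueness up to a constant is then immediate: for any viscosity solution $\tilde u_0$ of (\ref{H=c0}), (\ref{Du-}) gives $D\tilde u_0=Du_0$ pointwise on $\mathcal M=\mathbb T$, so $\tilde u_0-u_0$ is constant on the connected torus.

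The main obstacle I foresee is the reduction step from the second-order Euler--Lagrange flow to the first-order Lipschitz ODE $\dot x=v(x)$ on $\tilde{\mathcal M}$, which is the mechanism that simultaneously forbids $\dot\gamma$ from vanishing on a non-trivial orbit and enforces uniqueness of the rotational orbit. Making this rigorous requires aligning the Mather graph theorem, flow-invariance of $\tilde{\mathcal M}$, and Cauchy--Lipschitz in one step; once this is in place, the rest is one-dimensional geometry on $\mathbb T$ together with a routine implicit function bootstrap for the regularity of $u_0$.
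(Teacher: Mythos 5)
Your argument is correct in substance and takes a genuinely different route from the paper on the two delicate points. For $\dot\gamma\neq 0$, the paper works on the cotangent side: it first shows $u_0\in C^{1,1}$, sets $p=Du_0$, and uses the reachable gradient of the Lipschitz function $p$ together with the continuity of $(\dot\gamma,\dot p)$ to show that a zero of $\dot\gamma$ would make $(\gamma(t_0),p(t_0))$ a rest point of the Hamiltonian flow sitting on the periodic orbit, a contradiction. You instead extract monotonicity, $\dot\gamma\neq 0$, the exclusion of coexisting fixed points, and the uniqueness of the rotational orbit all from one source: Cauchy--Lipschitz uniqueness for the scalar field $\dot x=v(x)$ obtained from Mather's graph theorem. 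This is cleaner, and the reduction is sound: $\tilde{\mathcal M}$ is $\Phi^L_t$-invariant, so orbits in it project to solutions of $\dot x=\tilde v(x)$ for any Lipschitz extension $\tilde v$ of $v$, and uniqueness for that ODE applies to them. For the $C^2$ regularity, the paper inverts $t\mapsto\gamma(t)$ and solves a $C^1$ ODE for $(t(x),p(x))$, whereas you apply the implicit function theorem to $H(x,p)=c_0$ where $\partial_pH(x,Du_0(x))=v(x)\neq 0$ and identify the local branch with the continuous function $Du_0$; both rest on the same non-degeneracy and both are valid.

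The one step you assert rather than prove is the periodicity of the non-fixed orbit in your case (b), which is the content of the dichotomy in the first sentence of the lemma. Knowing $v(x_0)\neq 0$ and that $\dot\gamma$ never vanishes gives a strictly monotone lift $\bar\gamma$, but a priori $\bar\gamma$ could be bounded, with $\gamma(t)$ converging in both time directions to zeros of $v$ in $\mathcal M$; that would be a heteroclinic-type orbit, and your sentence ``Since $\gamma$ is $\tau$-periodic on $\mathbb T$'' presupposes what has to be shown. The paper closes this by invoking the Poincar\'e--Bendixson theorem together with the recurrence property of the Mather set. Within your framework the fix is short but should be stated: if $\bar\gamma$ is unbounded it reaches $\bar\gamma(0)+k$ for some integer $k\geq 1$ at some time $t_1$, and then $\gamma(t_1)=\gamma(0)$, $\dot\gamma(t_1)=v(\gamma(0))=\dot\gamma(0)$ forces periodicity by uniqueness of the Euler--Lagrange flow; if $\bar\gamma$ were bounded, its limit $x_\infty\in\mathcal M$ would satisfy $v(x_\infty)=0$ and the orbit would be non-recurrent, which is excluded because $\tilde{\mathcal M}$ is contained in the closure of the recurrent points of the supported invariant measures (Poincar\'e recurrence), and on the graph a recurrent point is either a zero of $v$ (impossible here, since $v(x_0)\neq 0$ and $v$ is continuous) or lies on an orbit that wraps around the circle. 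With that supplement, everything else in your proposal goes through.
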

\begin{proof}
Since $M=\mathbb T$, by the Poincar\'e-Bendixon Theorem for planar dynamical systems and the recurrence property of the Mather set, $\tilde{\mathcal M}$ consists of fixed points or periodic orbits. Let $\Gamma(t)=(\gamma(t),\dot\gamma(t))$ be a time periodic orbit contained in $\tilde{\mathcal M}$. Assume $t\mapsto \bar\gamma(t)$ is not monotone. Then there are two times $t_1<t_2$, such that $\gamma(t_1)=\gamma(t_2)$ and $\dot\gamma_-(t_1)\neq \dot\gamma_-(t_2)$, which contradicts the graph property. Therefore, $t\mapsto \bar\gamma(t)$ is monotone. Since $\gamma$ is periodic, $\mathcal M=\{\gamma\}=\mathbb T^1$.

Let $u_0$ be a solution of (\ref{H=c0}). By $\mathcal M=\mathbb T$ and (\ref{Du-}), $u_0$ is differentiable on $\mathbb T$ with the same derivative. By the Lipschitz graph property of $\tilde{\mathcal M}$, $Du_0(x)$ is Lipschitz continuous. Therefore, $u_0$ is of class $C^{1,1}$. For each $x_0\in\mathbb T$ and each solution $v_0$ of (\ref{H=c0}), we have
\[v_0(x)=v_0(x_0)+\int_{x_0}^xDv_0(x)dx=v_0(x_0)+\int_{x_0}^xDu_0(x)dx.\]
Then $u_0$ is unique up to a constant.

We then show that $\dot\gamma\neq 0$. Since $p(x):=Du_0(x)$ is Lipschitz continuous, $Dp(x)$ exists almost everywhere. Here we recall the reachable gradient of the Lipschitz continuous function $p:\mathbb T\to\mathbb R$ (see \cite{CS})
\[D^*p(x):=\{q:\ \exists x_n\to x,\ Dp(x_n)\ \textrm{exists\ and}\ Dp(x_n)\to q\},\]
which is compact. Assume there is $t_0$ such that $\dot\gamma(t_0)=0$. Take $x_n\to \gamma(t_0)$ with $Dp(x_n)$ exists. Since $t\mapsto \bar\gamma(t)$ is monotone and $\{\gamma\}=\mathbb T$, there is $t_n\to t_0$ with $\gamma(t_n)=x_n$. Then we have
\[\dot p(\gamma(t_n))=\frac{d}{dt}\bigg|_{t=t_n}Du_0(\gamma(t))=\frac{d}{dt}\bigg|_{t=t_n}p(\gamma(t))=Dp(\gamma(t_n))\cdot \dot \gamma(t_n).\]
Define $p(t)=Du_0(\gamma(t))$, then $(\gamma(t),p(t))$ satisfies the Hamilton equation (\ref{Heq}). Note that $t\mapsto (\gamma(t),p(t))$ is $C^1$, then $t\mapsto (\dot\gamma(t),\dot p(t))$ is continuous. Let $t_n\to t_0$. By continuity, we conclude that $\dot p(t_0)=0$. Then $(\gamma(t_0),p(t_0))$ is a fixed point on $\tilde{\mathcal M}$, which leads to a contradiction.

It remains to show that $u_0$ is $C^2$, that is, $Du_0(x)$ is $C^1$. Since $\dot\gamma\neq 0$, there is a inverse for $t\mapsto\gamma(t)$, which is denoted by $t(x)$. Let
$p(t)=\partial_{\dot x}L(\gamma(t),\dot\gamma(t))$ and $p(x)=p(t(x))$. Since $\dot\gamma\neq 0$, we have
\begin{equation}\label{neq0}
  \dot\gamma=\frac{\partial H}{\partial p}(\gamma(t),p(t))\neq 0.
\end{equation}
Then by (\ref{Heq}), $(t(x),p(x))$ satisfies
\[\frac{dt}{dx}=(\partial_pH(x,p(x)))^{-1},\quad \frac{dp}{dx}=-(\partial_pH(x,p(x)))^{-1}\partial_xH(x,p(x)).\]
Since $H$ is $C^2$, $(t(x),p(x))$ is $C^1$. By (\ref{Du-}), $p(x)=Du_0(x)$ is $C^1$.
\end{proof}

Since $\{\gamma\}=\mathbb T$ by Lemma \ref{uC2}, $m_T\in\mathfrak N$ in the present case. The existence of non-trivial time periodic solutions of (\ref{ME1})-(\ref{ME3}) is given in Theorem \ref{m1}.

\subsection{Proof of Item (1)}

Now we prove the uniqueness of the time periodic solution. Let $(\hat u(x,t),\hat m(x,t))$ be a time periodic solution of (\ref{ME1})-(\ref{ME3}) with period $\bar\tau$, where the constant $c\in\mathbb R$ may not be equal to $c(m_T)$. By (\ref{sole}), Lemma \ref{equi} and the periodicity of $\hat u(x,t)$, for $n\in\mathbb N$, we have
\[D\varphi(x)=\lim_{n\to+\infty}D\hat u(x,n\bar\tau)=\lim_{n\to+\infty}Dw(x,n\bar\tau)=Du_0(x)\]
for almost every $x\in\mathbb T$. Therefore, $\varphi=u_0$, which implies that
\[\hat u(x,t)=u_0(x)+\int_0^tF(m(s))ds+(c-c_0)t.\]
Note that $D\hat u(x,t)=Du_0(x)$. By Lemma \ref{unique}, $\hat m(t)$ is uniquely given by $\Phi(t,T,\cdot)_{\#}m_T$, which equals to $\bar m(t)$. Then
\[c=c_0-\frac{1}{\tau}\int_0^\tau F(\bar m(s))ds\]
is the unique constant such that $\hat u(x,t)$ is bounded. We finally conclude that $(\hat u(x,t),\hat m(x,t))$ is the time periodic solution given in (\ref{sol}). Since $t\mapsto F(m(t))$ is continuous, $\hat u(x,t)\in C^1(\mathbb T^1\times[0,T])$. By Lemma \ref{uC2}, $\hat u(x,t)$ is unique up to a constant.


\subsection{Proof of Item (2)}

\begin{lemma}\label{Philip}
For $x$ and $y\in\mathbb T$, there is $K_1>0$ such that
\[d(\Phi(t,T,x),\Phi(t,T,y))\leq K_1d(x,y),\quad t\in[T-\tau,T].\]
\end{lemma}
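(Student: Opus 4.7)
My plan is to exploit the fact that $\Phi(t,T,\cdot)$ is nothing but a time shift along the periodic orbit $\gamma$. Identifying $\mathbb T$ with $\mathbb R/\tau\mathbb Z$ through the parametrization $s\mapsto\gamma(s)$, the flow becomes the rigid translation $s\mapsto s+t-T$, which is an isometry of $\mathbb R/\tau\mathbb Z$. All of the distortion is therefore absorbed into the bi-Lipschitz constants of this single parametrization, and the claimed estimate will hold with $K_1$ independent of $t$.

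First, I would invoke Lemma \ref{uC2}: since $M=\mathbb T$, the curve $\gamma:\mathbb R\to\mathbb T$ is $C^2$, its lift $\bar\gamma$ is strictly monotone, $\dot\gamma(t)\neq 0$ for every $t$, and $\{\gamma\}=\mathbb T$. As $\dot\gamma$ is continuous and never vanishes on the compact fundamental period $[0,\tau]$, there exist constants $0<m\leq M<\infty$ with $m\leq|\dot\gamma(t)|\leq M$ for every $t\in\mathbb R$. Consequently the parametrization descends to a $C^1$ diffeomorphism $\psi:\mathbb R/\tau\mathbb Z\to\mathbb T$, $\psi(s):=\gamma(s)$, which is bi-Lipschitz: using the arc of $\gamma$ as a candidate path in $\mathbb T$ gives
\[d(\psi(s_1),\psi(s_2))\leq M\,|s_1-s_2|_{\mathbb R/\tau\mathbb Z},\]
and pulling back a minimizing geodesic from $\mathbb T$ through $\psi^{-1}$, whose derivative in absolute value equals $1/|\dot\gamma|\leq 1/m$, gives
\[|\psi^{-1}(x)-\psi^{-1}(y)|_{\mathbb R/\tau\mathbb Z}\leq \frac{1}{m}\,d(x,y).\]

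Finally, for $x=\gamma(t_0)\in\mathbb T$, by definition $\Phi(t,T,x)=\gamma(t+t_0-T)=\psi(\psi^{-1}(x)+t-T)$. Since translation by $t-T$ is an isometry of $\mathbb R/\tau\mathbb Z$, chaining the two bi-Lipschitz estimates yields
\[d(\Phi(t,T,x),\Phi(t,T,y))\leq M\,|\psi^{-1}(x)-\psi^{-1}(y)|_{\mathbb R/\tau\mathbb Z}\leq \frac{M}{m}\,d(x,y),\]
uniformly in $t\in\mathbb R$, so $K_1:=M/m$ does the job (and the restriction $t\in[T-\tau,T]$ is not actually needed). The only delicate point, which is an ingredient rather than an obstacle, is the strict positivity of $\min|\dot\gamma|$; this relies crucially on the absence of fixed points on $\tilde{\mathcal M}$ established in Lemma \ref{uC2}, without which the parametrization would fail to be bi-Lipschitz.
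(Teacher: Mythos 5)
Your proof is correct, but it follows a genuinely different route from the paper. The paper works with the ODE $\frac{d}{dt}\Phi(t,T,x)=\partial_pH(\Phi,Du_0(\Phi))$: it uses the monotonicity of the lift $\bar\gamma$ from Lemma \ref{uC2}(1) to keep $\Phi(t,T,x)-\Phi(t,T,y)\geq 0$, then the $C^2$ regularity of $u_0$ from Lemma \ref{uC2}(2) to make the vector field $x\mapsto\partial_pH(x,Du_0(x))$ Lipschitz, and concludes by a Gronwall/comparison argument with constant $K_1=e^{\tau K_2}$ (which is why the paper restricts to $t\in[T-\tau,T]$, the backward-in-time Gronwall constant growing with $T-t$; periodicity makes this restriction harmless). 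You instead conjugate $\Phi(t,T,\cdot)$ to the rigid translation of $\mathbb R/\tau\mathbb Z$ via the parametrization $\psi=\gamma$, and absorb the distortion into the bi-Lipschitz constants $M/m$ of $\psi$, which only requires $\dot\gamma$ to be continuous and nonvanishing --- i.e.\ Lemma \ref{uC2}(1) together with the $C^2$ regularity of Euler--Lagrange orbits, but not the $C^2$ regularity of $u_0$. Your constant is uniform in $t\in\mathbb R$, which is slightly stronger than what the paper states; the trade-off is that your argument is specific to the one-dimensional situation where the flow is conjugate to a rotation, whereas the Gronwall argument would survive in settings without such a conjugacy. One point you use implicitly and could state: $\psi$ is a bijection of $\mathbb R/\tau\mathbb Z$ onto $\mathbb T$ because the graph property forces $\gamma$ to be injective on a fundamental period; this is the same fact the paper needs to make $\Phi(t,T,x)=\gamma(t+t_0-T)$ well defined, so it is not an additional gap.
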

\begin{proof}
Since the argument is local, we assume $\mathbb T$ is an open subset of $\mathbb R$. Then the distance $d(x,y)=|x-y|$. By Lemma \ref{uC2} (1), the lift $\bar\gamma:\mathbb R\to\mathbb R$ is monotone. Then for $x\geq y$, we have $\Phi(t,T,x)-\Phi(t,T,y)\geq 0$. By Lemma \ref{uC2} (2), both $\|Du_0\|_\infty$ and $\|D^2u_0\|_\infty$ is bounded. Therefore, there is $K_2>0$ large enough such that
\begin{align*}
&\frac{d}{dt}(\Phi(t,T,x)-\Phi(t,T,y))
\\&=\partial_p H(\Phi(t,T,x),Du_0(\Phi(t,T,x)))-\partial_pH(\Phi(t,T,y),Du_0(\Phi(t,T,y)))
\\ &\leq K_2(\Phi(t,T,x)-\Phi(t,T,y)),
\end{align*}
where $\Phi(T,T,x)-\Phi(T,T,y)=x-y$. 
By the comparison property of ordinary differential equations, we have
\[0\leq \Phi(t,T,x)-\Phi(t,T,y)\leq e^{\tau K_2}d(x,y)\] for $x\geq y$ and $t\in [T-\tau,T]$.
\end{proof}
For two different final condition $m^1_T$ and $m^2_T$, let $\bar m_i(t)=\Phi(t,T,\cdot)_{\#}m^i_T$, $i=1,2$. For $t\in [T-\tau,T]$, we have
\[d_1(\bar m_1(t),\bar m_2(t))=\sup_{\phi}\int_{\mathbb T}\phi(\Phi(t,T,x))d(m^1_T-m^2_T)\leq K_1d_1(m^1_T,m^2_T),\]
since $x\mapsto \phi(\Phi(t,T,x))$ is $K_1$-Lipschitz continuous by Lemma \ref{Philip}. By (\ref{cmt}) and (F),
\begin{align*}
|c(m^1_T)-c(m^2_T)|&=\frac{1}{\tau}\int_0^\tau |F(\bar m_1(s))-F(\bar m_2(s))|ds
\\ &=\frac{1}{\tau}\int_{T-\tau}^T |F(\bar m_1(s))-F(\bar m_2(s))|ds
\\ &\leq \frac{1}{\tau}\int_{T-\tau}^T Cd_1(\bar m_1(s),\bar m_2(s))ds
\leq CK_1d_1(m^1_T,m^2_T),
\end{align*}
which implies that $m_T\mapsto c(m_T)$ is Lipschitz continuous in $d_1$.


\subsection{Proof of Item (3)}

Let $y=\Phi(t,T,x)$, define the inverse $x=\Phi^{-1}(t,T,y)$.
\begin{lemma}
The map $(t,T,y)\mapsto \Phi^{-1}(t,T,y)$ is of class $C^1$.
\end{lemma}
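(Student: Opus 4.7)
The plan is to identify $\Phi^{-1}$ with the time reversal of an autonomous flow on $\mathbb{T}$ and then invoke the standard smooth dependence of ODE solutions on time and initial data. First I would exploit Lemma \ref{uC2}: it gives $\mathcal{M}=\{\gamma\}=\mathbb{T}$ and $u_0\in C^2(\mathbb{T})$, so that the vector field
\[V(x):=\partial_p H(x,Du_0(x))\]
is of class $C^1$ on $\mathbb{T}$, and moreover, by (\ref{neq0}), $V$ is nowhere zero. Denoting by $\Psi_s$ the flow of the autonomous ODE $\dot y=V(y)$ on $\mathbb{T}$, standard ODE theory yields that $(s,y)\mapsto \Psi_s(y)$ is of class $C^1$ on $\mathbb{R}\times\mathbb{T}$, and that each $\Psi_s$ is a $C^1$ diffeomorphism with inverse $\Psi_{-s}$.

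Next I would verify that $\Phi(t,T,x)=\Psi_{t-T}(x)$ for every $(t,T,x)$. For $x=\gamma(t_0)$, the orbit $\Gamma=(\gamma,\dot\gamma)$ satisfies the Hamilton equation (\ref{Heq}) together with the identity $Du_0(\gamma(s))=\partial_{\dot x}L(\gamma(s),\dot\gamma(s))$ from (\ref{Du-}); combined with the Legendre relation, this gives $\dot\gamma(s)=\partial_p H(\gamma(s),Du_0(\gamma(s)))=V(\gamma(s))$. Hence $\gamma$ is an integral curve of $V$, so $\Psi_{t-T}(\gamma(t_0))=\gamma(t_0+t-T)=\Phi(t,T,\gamma(t_0))$. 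Because $\{\gamma\}=\mathbb{T}$ by Lemma \ref{uC2}(1), this equality extends to every $x\in\mathbb{T}$.

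Combining the two steps would give $\Phi^{-1}(t,T,y)=\Psi_{T-t}(y)$, a composition of $C^1$ maps, hence $C^1$ jointly in $(t,T,y)$. The only subtle point is that the inversion is taken in the spatial variable $y$ while $t$ and $T$ are parameters; without the autonomous identification one would have to apply the implicit function theorem to $G(t,T,y,x)=\Phi(t,T,x)-y$ together with the non-degeneracy $\partial_x\Phi(t,T,x)\neq 0$, which itself rests on $V\neq 0$ along $\gamma$. The autonomous reduction circumvents this and is what I expect to be the cleanest route; the main (mild) thing to get right is the reduction to the autonomous flow, which is genuinely dependent on the $C^2$ regularity of $u_0$ provided by Lemma \ref{uC2}.
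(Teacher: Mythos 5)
Your proof is correct, but it takes a genuinely different route from the paper. The paper works on the cover $\mathbb{R}$ of $\mathbb{T}$ and solves the one-dimensional ODE by separation of variables: it sets $G(x)=\int(\partial_p H(x,Du_0(x)))^{-1}dx$, uses the non-vanishing of $v(x)=\partial_pH(x,Du_0(x))$ (from (\ref{neq0})) to invert $G$, and obtains the explicit formula $\Phi^{-1}(t,T,y)=G^{-1}(G(y)+T-t)$; since $G$ is $C^2$ this actually yields $C^2$ regularity, one degree more than the lemma states. You instead identify $\Phi^{-1}(t,T,\cdot)$ with the time-$(T-t)$ map of the autonomous flow of the $C^1$ vector field $V=\partial_pH(\cdot,Du_0(\cdot))$ and invoke the standard $C^1$ dependence of flows on time and initial data. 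Your reduction $\Phi(t,T,x)=\Psi_{t-T}(x)$ is correctly justified via (\ref{Du-}), the Legendre relation, and $\{\gamma\}=\mathbb{T}$ from Lemma \ref{uC2}(1). Your argument is more robust (it works in any dimension and does not need $V\neq 0$, since the inverse of $\Psi_s$ is $\Psi_{-s}$ rather than a spatial inversion), while the paper's quadrature is specific to dimension one but buys the extra derivative. One caveat: the subsequent proof of Item (3) uses $\partial_x\Phi^{-1}(t,T,x)$ and needs it to be $C^1$, i.e.\ it really uses the $C^2$ regularity that the paper's proof delivers; if you wanted to recover that within your framework you would add the one-dimensional identity $\partial_y\Psi_s(y)=V(\Psi_s(y))/V(y)$ (valid where $V\neq 0$), which is $C^1$ in $(s,y)$ because $V$ is $C^1$. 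For the lemma as literally stated, your proof is complete.
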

\begin{proof}
We prove this lemma on the cover $\mathbb R$ of $\mathbb T$. Since
\[\frac{dx}{dt}=\frac{\partial H}{\partial p}(x,Du_0(x)),\]
solving the above ordinary differential equation directly, we have
\begin{equation}\label{NL}
  \int_y^{\Phi^{-1}(t,T,y)}(\partial_p H(x,Du_0(x)))^{-1}dx=T-t.
\end{equation}
Let
\[G(x):=\int(\partial_p H(x,Du_0(x)))^{-1}dx.\]
By Lemma \ref{uC2}, $Du_0$ is of class $C^1$. By (\ref{neq0}),
\begin{equation*}\label{v(x)}
  v(x):=\frac{\partial H}{\partial p}(x,Du_0(x))\neq 0,
\end{equation*}
and $v(x)$ is of class $C^1$. Then $G'(x)=v(x)^{-1}\neq 0$, which implies $G$ is invertible. We also have $G''(x)=-v'(x)v(x)^{-2}$, which implies that $G(x)$ is of class $C^2$. Then it has a $C^2$ inverse $G^{-1}(x)$. By (\ref{NL}) we have
\[G(\Phi^{-1}(t,T,y))-G(y)=T-t,\]
which implies that $\Phi^{-1}(t,T,y)=G^{-1}(G(y)+T-t)$ is of class $C^2$.
\end{proof}
When $m_T$ is $C^1$, by definition we have
\begin{align*}
\int_{\mathbb T}f(x)d\bar m(t)&=\int_{\mathbb T}f(x)d\Phi(t,T,\cdot)_{\#}m_T
=\int_{\mathbb T}f(\Phi(t,T,x))m_T(x)dx
\\ &=\int_{\mathbb T}f(y)m_T(\Phi^{-1}(t,T,y))d\Phi^{-1}(t,T,y)
\\ &=\int_{\mathbb T}f(y)m_T(\Phi^{-1}(t,T,y))\partial_y\Phi^{-1}(t,T,y)dy,\quad \forall f\in C(\mathbb T),
\end{align*}
where $y=\Phi(t,T,x)$. Then we finally get \[\bar m(x,t)=m_T(\Phi^{-1}(t,T,x))\partial_x\Phi^{-1}(t,T,x),\] which is of class $C^1$.

\subsection{Proof of Item (4)}

The existence and uniqueness of the weak solution of (\ref{ME1})-(\ref{ME3}) have been proved in Lemma \ref{unique}. Now we consider the large time behavior of the unique solution of (\ref{ME1})-(\ref{ME3}) for each $\varphi\in C(M)$, where $c=c(m_T)$. According to \cite[Theorem 6.4.9]{CS}, for almost every $x\in\mathbb T$, \[(x(s),p(s)):=(\Psi(s,T,x),Dw(\Psi(s,T,x),s)),\quad s\in(0,T)\] satisfies the Hamilton equation (\ref{Heq}) with $(x(T),p(T))=(x,Dw(x,T))$. By Lemma \ref{equi} and the continuous dependence of solutions of ODEs on the final values, for almost all $x$ and each $t\in[0,+\infty)$, we have $\Psi(s,T,x)\to \Phi(s,T,x)$ uniformly for $s\in[T-t,T]$ as $T\to+\infty$. Define
\[f_T(x)=\max_{s\in[T-t,T]}d(\Psi(s,T,x),\Phi(s,T,x)),\]
then $f_T(x)\to 0$ a.e. on $\mathbb T$.

For $m(s)$, we have
\begin{align*}
d_1(m(s),\bar m(s))&=\sup_{\phi}\int_{\mathbb T}(\phi(\Psi(s,T,x))-\phi(\Phi(s,T,x)))dm_T(x)
\\ &\leq \int_{\mathbb T}d(\Psi(s,T,x),\Phi(s,T,x))m_T(x)dx
\\ &\leq \int_{\mathbb T}f_T(x)m_T(x)dx, \quad s\in[T-\tau,T].
\end{align*}
Since $m_T(x)$ is bounded and $\mathbb T$ is compact, by the dominated convergence theorem, \[d_1(m(s),\bar m(s))\to 0,\quad \textrm{as}\quad T\to+\infty,\] uniformly for $s\in [T-t,T]$.

For $u(x,s)$, we first take suitable $u_0$. Then we have
\begin{align*}
&u(x,s)-\int_0^{T-t} F(m(\xi))d\xi-\bigg(\bar u(x,s)-\int_0^{T-t} F(\bar m(\xi))d\xi\bigg)
\\ &=w(x,s)+c_0s-u_0(x)+\int_{T-t}^s(F(m(\xi))-F(\bar m(\xi)))d\xi,
\end{align*}
where $w(x,s)+c_0s$ uniformly converges to $u_0$ as $T\to +\infty$. By (F) we have
\begin{align*}
&\bigg|\int_{T-t}^s(F(m(\xi))-F(\bar m(\xi)))d\xi\bigg| \leq C\int_{T-\tau}^Td_1(m(s),\bar m(s))ds
\\ &\leq C\int_{T-\tau}^T\int_{\mathbb T}d(\Psi(s,T,x),\Phi(s,T,x))m_T(x)dxds
\\ &=C\int_0^\tau\int_{\mathbb T}d(\Psi(T-\tau+s,T,x),\Phi(T-\tau+s,T,x))m_T(x)dxds
\\ &\leq C\int_0^\tau\int_{\mathbb T}f_T(x)m_T(x)dxds\to 0,\quad \textrm{as}\quad T\to+\infty,
\end{align*}
uniformly for $s\in [T-t,T]$ by the dominated convergence theorem.

\medskip

\section*{Acknowledgements}

The author 
would like to thank Professor J. Yan for many helpful discussions.

\medskip

\end{document}